\let\svthefootnote\thefootnote
\newcommand\freefootnote[1]{%
  \let\thefootnote\relax%
  \footnotetext{#1}%
  \let\thefootnote\svthefootnote%
}
\newcommand{\address}[1]{\gdef\@address{#1}}
\newcommand{\email}[1]{\gdef\@email{\url{#1}}}
\newcommand{\@endstuff}{\par\vspace{\baselineskip}\noindent\small
\begin{tabular}{@{}l}\scshape\@address\\\textit{E-mail address:} \@email\end{tabular}}
\address{\parbox{\linewidth}{St. Petersburg Departement of Steklov Math. Institute,\\ Fontanka 27, 191023 St. Petersburg, Russia}}
\email{polyakov@pdmi.ras.ru}
\newtheorem{theorem}{Theorem}
\newtheorem{lemma}{Lemma}
\theoremstyle{definition}
\theoremstyle{remark}
\DeclareMathOperator{\Gal}{Gal}
\DeclareMathOperator{\Tr}{Tr}
\DeclareMathOperator{\res}{res}
\title{Artin-Hasse formula for $p^m-$primary elements

}
\author{Vladimir Polyakov}
\date{}
\begin{document}

	\maketitle
	
	\freefootnote{This work was supported by the Ministry of Science and Higher Education of the Russian
Federation, agreement № 075-15-2022-289}
	\begin{abstract}
	    Using Borevich's system of generators and relations, the classical Artin-Hasse formula is obtained from scratch in the case when taking $p$-th root of the second argument of the Hilbert symbol gives an unramified $p$-extension of the same degree of irregularity. Under the same assumptions, in the case of Lubin-Tate formal groups, an expression for the Hilbert symbol is obtained in terms of the expansion of elements by the same system of generators.
	\end{abstract}
	\abovedisplayskip=.8\abovedisplayskip
\abovedisplayshortskip=.8\abovedisplayshortskip
\belowdisplayskip=.8\belowdisplayskip
\belowdisplayshortskip=.8\belowdisplayshortskip
\section*{Introduction}

The classical Hilbert symbol for a one-dimensional local field $K$ containing the $p^n$-th root of unity $\zeta_{p^n}$ is the following pairing
$$(-,-)_{p^n}:K^\times\times K^\times\rightarrow \left<\zeta_{p^n}\right>,\ (a,b)_{p^n}=\frac{\sigma_a(\sqrt[p^n]{b})}{\sqrt[p^n]{b}}$$
where $\sigma_{()}:L^\times\rightarrow\Gal(L^{ab}/L)$ is the local reciprocity map. There are two types of explicit formulas for the Hilbert symbol: Kummer type formulas and Artin-Hasse type formulas.

Formulas of the first type are generalizations of the classical Kummer formula \cite{Kummer}:
$$(a,b)_p=\zeta_p^\gamma,\ \gamma=\res(\log b(X)d\log a(X)X^{-p})$$
where $a,b$ are the principal units of the field $K=\mathbb{Q}_p(\zeta_p)$ and $a(X), b(X)$ are power series with coefficients from $\mathbb{Z}_p$ such that $a(X)|_{X=\zeta_p-1}=a,\ b(X)|_{X=\zeta_p-1}=b$. Formulas of this type include the formulas of Iwasawa and Wiles. One of the most general formulas of this type is the Bruckner-Vostokov formula \cite{VostokovGlavn1}, \cite{VostokovGlavn2}, \cite{Bruckner}.

Classical formulas of the Artin-Hasse type do not require the construction of power series and taking derivatives and residues \cite{ArtinHasse}:
$$(\zeta_{p^n},b)_{p^n}=\zeta_{p^n}^\gamma, \gamma=\frac{1}{p^n}\Tr_{K/\mathbb{Q}_p}(\log b)$$
$$(\pi,b)_{p^n}=\zeta_{p^n}^\gamma, \gamma=\frac{1}{p^n}\Tr_{K/\mathbb{Q}_p}(\zeta\pi^{-1}\log b)$$
where $K=\mathbb{Q}_p(\zeta_{p^n})$, $\pi=\zeta_{p^n}-1$. However, power series already appear in generalizations of these formulas (Sh. Sen \cite{ShSen}):
$$(a,b)_{p^n}=\zeta_{p^n}^\gamma, \gamma=\frac{1}{p^n}\Tr_{K/\mathbb{Q}_p}\left(\frac{\zeta_{p^n}}{g'(\pi)}\frac{f'(\pi)}{f(\pi)}\log b)\right)$$
where $g(\pi)=\zeta_{p^n}, f(\pi)=a$, plus some conditions on $a,b$.

In this paper we will prove the second Artin-Hasse formula, but only for those $b$ whose $p-$th roots added to the field $\mathbb{Q}_p$ give an unramified $p-$extension with the same degree of irregularity. The proof will not follow the classical path. Usually, a formula is first guessed, after which it is checked on some convenient system of generators, and then it is concluded that this formula is true for all elements. We will not initially assume that the formula is known, but by successive transformations we will derive it from scratch, so that it will be clear where it comes from. To do this, we turn to a non-standard system of generators and relations for these purposes, which was constructed by Borevich \cite{Borevic} and Iwasawa \cite{Iwas} for the classical case of an unramified $p-$extension of a local field and further generalized to the case of formal modules over unramified extensions \cite{Ilya}, \cite{Tigran}, \cite{Polyakov}.

Borevich's article also gives other similar constructions for cyclic extensions that can be generalized to the case of formal groups. Apparently, such generalizations are limited to the above list of papers on unramified extensions. Then, if similar systems of generators and relations for formal modules over extensions with ramification appear, then having done similar calculations, it will be possible to extend the results of this paper to these cases.

This paper presents the simplest case of using this system of generators. By longer calculations, the same Artin-Hasse formula is obtained for the case of the higher cyclotomic extensions ($n>1$). Apparently, this method is applicable to Lubin-Tate formal groups and more complex formal groups, but this requires an explicit description of the generators, which may require resorting to explicit reduction theory and much more subtle calculations.
\section*{Preliminaries}
Let $K$ be a local field (finite extension of $\mathbb{Q}_p$) with residue field $k$, $\pi$ is a prime element in $K$, $F$ is a one dimensional formal group law over ring of integers $\mathcal{O}_K$ with maximal ideal $\mathfrak{m}_K$. Let us define the structure of the formal module on the maximal ideal $F(\mathfrak{m})$ in the standard way:
$$a+_F b=F(a,b),\ \alpha a=[\alpha](a)$$
where $a,b\in F(\mathfrak{m}),\ \alpha\in \mathcal{O}_{K_0}\simeq End_{\mathcal{O}_K}(F)$.

 Throughout this article, we will stick to the following assumptions: consider the following chain of extensions
$$\mathbb{Q}_p-K-L-M$$
where $L/K$ any finite extension of degree $n$, $M/L$ unramified extension of degree $p^m$ and $\mathfrak{m}$ is the maximal ideal of $\mathcal{O}_M$. $F$ is the Lubin-Tate group law on $K$. We also assume that fields $L$ and $M$ are $s-$irregular, i.e. contain $\Lambda_{\pi,s}$, but do not contain $\Lambda_{\pi,s+1}$, where $\Lambda_{\pi,k}=\ker[\pi^k]$. Also let $G=Gal(M/L)=\left<\sigma\right>$ be a Galois group of M over L with $\sigma$ generator. We will denote by $\zeta$ the generator of the $\Lambda_{\pi,s}$.
Consider the following perfect pairing
$$\frac{L^{\times }}{N_{M/L}M^{\times }}\times \frac{([\pi^s]F(\mathfrak{m}_M))\cap F(\mathfrak{m}_L)}{[\pi^s]F(\mathfrak{m}_L)}\rightarrow\Lambda_{\pi,s}$$
$$(t,x)\mapsto \sigma_t(y)-_F y$$
where $[\pi^s](y)=x$, $M=L(y)$ - unramified $p-$extension of field $L$, $\sigma_{(\cdot)}$ - reciprocity map. We call it generalised Hilbert pairing.
\\

Since we require that the adjunction $y$ to $L$ gives an unramified extension $M$, the reciprocity mapping will give us a Frobenius automorphism $\sigma^k\in G$ with $k$ equal to the valuation $\nu(t)$, where $\nu$ normalized such that $\nu(\pi_L)=1$. We will study this pairing in the simplest case, when $k=1$, the rest of the cases are derived in a similar way.

From \cite{Ilya},\cite{Tigran} and \cite{Polyakov} we know the following structure of $F(\mathfrak{m})$ as $\mathcal{O}_K[G]$-module. 
\begin{theorem}\label{osnov}
If the previous conditions are met, then for the $\mathcal{O}_K[G]$-module $F(\mathfrak{m})$ there is the following system of generators $\theta_1,...,\theta_{n-1},\xi,\omega$ with a single relation $(\sigma -_F 1)(\omega)=[\pi^s](\xi)$.
\end{theorem}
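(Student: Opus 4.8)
The plan is to realise $F(\mathfrak{m}_M)$ as the module presented by the stated generators and relation, by comparing it, through the valuation filtration, with a module whose invariants I can compute explicitly. Since $M/L$ is unramified, $G=\langle\sigma\rangle$ is cyclic of order $p^m$ with $\sigma$ the Frobenius, a prime $\pi_L$ of $L$ stays prime in $M$, and the residue extension $k_M/k_L$ is cyclic of degree $p^m$. Two invariants will control everything. Applying the formal logarithm gives a $G$-equivariant map $F(\mathfrak{m}_M)\to M$ with finite kernel $\Lambda_{\pi,s}$ and image a full-rank lattice; by the normal basis theorem $M\simeq K[G]^{\,n}$ as a $K[G]$-module, so $F(\mathfrak{m}_M)$ has $\mathcal{O}_K$-rank $np^m$, i.e.\ rank $n$ over $\mathcal{O}_K[G]$. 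By $s$-irregularity its torsion subgroup is exactly $\Lambda_{\pi,s}\simeq\mathcal{O}_K/\pi^s$.

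First I would build the generators from the bottom up. Filtering by the $F(\mathfrak{m}_M^i)$, each quotient $F(\mathfrak{m}_M^i)/F(\mathfrak{m}_M^{i+1})\simeq\mathfrak{m}_M^i/\mathfrak{m}_M^{i+1}$ is a one-dimensional $k_M$-space, and multiplication by $\pi_L^{-i}$ identifies it $G$-equivariantly with $k_M$, which is $k_L[G]$-free of rank one by the normal basis theorem. Combined with the normal integral basis $\mathcal{O}_M\simeq\mathcal{O}_L[G]\simeq\mathcal{O}_K[G]^{n}$ available for the unramified extension $M/L$, a Hensel/Teichmüller lift produces $n$ elements generating, modulo torsion, a submodule of the expected rank over $\mathcal{O}_K[G]$. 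Among these I isolate $\omega$ so that $\sigma(\omega)-_F\omega$ is divisible by $[\pi^s]$ — possible precisely because $L$, hence $M$, contains $\Lambda_{\pi,s}$ — and define $\xi$ by $[\pi^s](\xi)=\sigma(\omega)-_F\omega$; the remaining $n-1$ lifts are $\theta_1,\dots,\theta_{n-1}$. The relation $(\sigma-_F 1)(\omega)=[\pi^s](\xi)$ then holds by construction, and it produces the torsion: writing $N_\sigma(x)=x+_F\sigma(x)+_F\cdots+_F\sigma^{p^m-1}(x)$ for the formal norm, the telescoping $N_\sigma\big(\sigma(\omega)-_F\omega\big)=0$ forces $[\pi^s]\big(N_\sigma(\xi)\big)=0$, so $N_\sigma(\xi)\in\Lambda_{\pi,s}$; one checks it generates, where $\sigma$ fixing $\Lambda_{\pi,s}$ pointwise (as $\Lambda_{\pi,s}\subset L$) is used.

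It then remains to see that there are no further relations, i.e.\ that the tautological surjection from the presented module $P=\langle\theta_1,\dots,\theta_{n-1},\xi,\omega\mid(\sigma-_F 1)(\omega)=[\pi^s](\xi)\rangle$ onto $F(\mathfrak{m}_M)$ is an isomorphism. A direct calculation in $\mathcal{O}_K[G]^{\,n+1}$ shows that the relation element has trivial annihilator, hence spans a free rank-one submodule, so $P$ has $\mathcal{O}_K$-rank $(n+1)p^m-p^m=np^m$ and torsion submodule $\mathcal{O}_K/\pi^s$ generated by $N_\sigma(\xi)$ — the same invariants as $F(\mathfrak{m}_M)$. Since both modules are finitely generated over the Noetherian ring $\mathcal{O}_K$ and abstractly isomorphic as $\mathcal{O}_K$-modules, a surjection between them is forced to be an isomorphism.

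The main obstacle is the construction in the second paragraph: arranging $\omega$ so that $\sigma(\omega)-_F\omega$ is \emph{exactly} $[\pi^s]$-divisible and $N_\sigma(\xi)$ attains the full order $\pi^s$, together with surjectivity of the resulting map. Both rest on a careful reading of the filtration through the formal logarithm and use the unramifiedness and the equal degree of irregularity of $L$ and $M$ in an essential way; once they are in place, injectivity is purely formal by the invariant count above.
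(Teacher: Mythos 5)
Your closing step (injectivity from a surjection plus matching invariants) is sound, and your bookkeeping for the presented module $P$ is essentially correct: the relation element generates a free rank-one submodule of $\mathcal{O}_K[G]^{\,n+1}$, the torsion of $P$ is $\mathcal{O}_K/\pi^s$ generated by the class of $N_F(\xi)$ (this uses that the annihilator of $\sigma-1$ in $R[G]$ is $R\cdot(1+\sigma+\dots+\sigma^{p^m-1})$ for any coefficient ring $R$), and a surjective endomorphism of a finitely generated module over a commutative Noetherian ring is injective. But the existence half --- constructing $\theta_i,\xi,\omega$ satisfying the relation and proving they generate --- is the actual content of the theorem, and there your proposal has a genuine gap; in fact the mechanism you describe cannot work. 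If $\omega$ is a lift of the kind you specify, i.e.\ its image in $\mathfrak{m}_M/\mathfrak{m}_M^2\simeq k_M$ belongs to a normal basis of $k_M/k_L$ and in particular is not fixed by $\sigma$, then $\sigma(\omega)-_F\omega\equiv\sigma(\omega)-\omega\not\equiv 0 \pmod{\mathfrak{m}_M^2}$, so it has valuation exactly $1$; on the other hand every element of $[\pi^s]F(\mathfrak{m}_M)$ has valuation at least $2$, since $[\pi](x)=\pi x+(\text{terms of degree}\ge 2)$ gives $v([\pi]x)\ge\min\bigl(v(\pi)+v(x),\,2v(x)\bigr)\ge 2$. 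Hence no normal-basis lift can have $\sigma(\omega)-_F\omega$ divisible by $[\pi^s]$: the true $\omega$ necessarily has $\sigma$-invariant image in $\mathfrak{m}_M/\mathfrak{m}_M^2$, and the presence of $\Lambda_{\pi,s}$ in $L$ is not what produces it.

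What the construction actually rests on are two facts special to unramified extensions that never appear in your proposal, and which drive the proof in the works this theorem is quoted from (\cite{Ilya}, \cite{Tigran}, Theorem 4 of \cite{Polyakov}; note the present paper does not prove Theorem \ref{osnov} but cites it, describing the construction later): (i) surjectivity of the formal norm $N_F\colon F(\mathfrak{m}_M)\to F(\mathfrak{m}_L)$, which lifts a system of $\mathcal{O}_K$-generators of $F(\mathfrak{m}_L)$ --- one of them tied to the torsion generator $\zeta$ --- to $\theta_1,\dots,\theta_{n-1}$ and to $\xi$ with $N_F\xi=\zeta$; and (ii) triviality of $H^1(G,F(\mathfrak{m}_M))$ (the Hilbert 90 analogue, Theorem 3 of \cite{Polyakov}), applied to $[\pi^s]\xi$, which lies in the kernel of the norm because $N_F([\pi^s]\xi)=[\pi^s]\zeta=0$; this yields $\omega$ with $(\sigma-_F 1)\omega=[\pi^s]\xi$. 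The order is the reverse of yours: $\xi$ comes first and $\omega$ is derived from it. Separately, your surjectivity claim is also unsupported: ``generating, modulo torsion, a submodule of the expected rank'' is far weaker than generating $F(\mathfrak{m}_M)$ (already $[\pi]F(\mathfrak{m}_M)$ has full rank), and generation needs either the norm-lifting argument above or Nakayama over the local ring $\mathcal{O}_K[G]$ together with an actual computation of $F(\mathfrak{m}_M)/(\pi,\sigma-_F 1)F(\mathfrak{m}_M)$. Once existence and generation are in place, your invariant-count finish is a clean alternative to counting relations directly.
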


So we can express our $y$ as follows:
$$y=\sum_{i=1...n-1,\ j=1...p^m}^F [d_{ij}]\theta_i^{\sigma^j}+_F\sum^F_{j}([c_j]\omega^{\sigma^j}+_F[b_j]\xi^{\sigma^j})$$
where $d_{ij}, c_j, b_j\in\mathcal{O}_K$.\\
Since $\sigma y-_F y\in \Lambda_{\pi,s}$, $d_{ij}$ doesn't depend on $j$ and hence we can rename them as $d_i$:
\begin{equation}\label{equ1}
    y=\sum^F_{i} [d_{i}] N_F\theta_i+_F\sum^F_{j}([c_j]\omega^{\sigma^j}+_F[b_j]\xi^{\sigma^j})
\end{equation}
where $N_F(\theta)=\theta+_F\theta^\sigma+_F\theta^{\sigma^2}+_F...+_F\theta^{\sigma^{p^m-1}}$ is the formal norm operator.

Applying $\sigma -_F 1$ and using the relation from the theorem, we obtain the following
$$\sigma y -_F y=\sum^F_{i}[\pi^s c_i]\xi^{\sigma^i}+_F \sum^F_{i}[b_{i-1}-b_i]\xi^{\sigma^i}=\sum^F_{i}[\pi^s c_i+b_{i-1}-b_i]\xi^{\sigma^i}=[\gamma]\zeta$$
for some $\gamma\in\mathcal{O}_K$. In this equality $\gamma$ is the desired value of the "exponent" of $\zeta$ in the Hilbert symbol, it is determined up to $\pi^s$ and calculating it is our main task.
\subsection*{Expression of $\gamma$ in terms of the coefficients of the formal expansion of $y$}
Let us prove a small technical lemma.
\begin{lemma}
Suppose that $\sum^F_{i}[\delta_i]\xi^{\sigma^i}=0$. Then all $\delta_i$ are equal and lie in the ideal $(\pi^s)$.
\end{lemma}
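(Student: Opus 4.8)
The plan is to read the formal sum as the action of a single group-ring element on $\xi$. Set $R=\mathcal O_K[G]=\mathcal O_K[\sigma]/(\sigma^{p^m}-1)$ and let $N=\sum_{i=0}^{p^m-1}\sigma^i\in R$ be the group-ring norm, so that $N\xi=N_F\xi$. Since the scalar action $[\cdot]$ and the Galois action together make $F(\mathfrak m)$ an $R$-module, the hypothesis $\sum_i^F[\delta_i]\xi^{\sigma^i}=0$ is exactly $\alpha\xi=0$ for $\alpha=\sum_i\delta_i\sigma^i\in R$. The coefficients $\delta_i$ are the coordinates of $\alpha$ in the $\mathcal O_K$-basis $1,\sigma,\dots,\sigma^{p^m-1}$, so the conclusion ``all $\delta_i$ equal and in $(\pi^s)$'' is equivalent to $\alpha\in(\pi^s)\,N$. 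Thus it suffices to show $\operatorname{Ann}_R(\xi)\subseteq(\pi^s)N$.

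To compute this annihilator I would use Theorem~\ref{osnov}, which presents $F(\mathfrak m)$ as the free $R$-module on $\theta_1,\dots,\theta_{n-1},\xi,\omega$ modulo the single relation $r=(\sigma-1)e_\omega-\pi^s e_\xi$, where $e_\xi,e_\omega$ are the free generators mapping to $\xi,\omega$ and $\pi^s\in\mathcal O_K\subset R$ acts as $[\pi^s]$. The generators $\theta_i$ do not occur in $r$, so they split off as a free summand and may be ignored. Hence $\alpha\xi=0$ means $\alpha e_\xi\in Rr$, i.e. $\alpha e_\xi=\beta r=\beta(\sigma-1)e_\omega-\beta\pi^s e_\xi$ for some $\beta\in R$. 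Comparing the $e_\omega$- and $e_\xi$-coordinates in the free module yields the two relations $\beta(\sigma-1)=0$ and $\alpha=-\pi^s\beta$, so everything reduces to identifying $\operatorname{Ann}_R(\sigma-1)$.

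The core computation is $\operatorname{Ann}_R(\sigma-1)=\mathcal O_K\,N$. I would lift to $\mathcal O_K[x]$, which is a UFD since $\mathcal O_K$ is a complete discrete valuation ring: the equation $\beta(\sigma-1)=0$ reads $(x^{p^m}-1)\mid(x-1)\beta(x)$, and from the factorization $x^{p^m}-1=(x-1)N(x)$ (with $x-1$ a non-zero-divisor) this is equivalent to $N(x)\mid\beta(x)$. Writing $\beta(x)=N(x)\gamma(x)$ and reducing modulo $\sigma^{p^m}-1$, the identity $\sigma N=N$ gives $N\gamma(\sigma)=\gamma(1)N$, so $\beta=cN$ with $c=\gamma(1)\in\mathcal O_K$. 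Then $\alpha=-\pi^s c\,N\in(\pi^s)N$; that is, every $\delta_i$ equals the common value $-\pi^s c\in(\pi^s)$, which is exactly the assertion of the lemma.

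I expect the main obstacle to be this group-ring annihilator computation, together with the preliminary bookkeeping that turns the phrase ``single relation'' in Theorem~\ref{osnov} into the clean free presentation $R^2/Rr$; once $\operatorname{Ann}_R(\sigma-1)=\mathcal O_K\,N$ is established, the rest is purely formal. As a sanity check, applying $N$ to $r$ gives $Nr=-\pi^s Ne_\xi$, so that $[\pi^s]N\xi=0$ and $N_F\xi\in\Lambda_{\pi,s}$, matching the computation of $\sigma y-_F y$ that precedes the lemma.
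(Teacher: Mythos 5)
Your proof is correct, but it takes a genuinely different route from the paper's. The paper argues by successive approximation at the level of elements: it invokes Lemma 5 of \cite{Polyakov} to get $\delta_i=\pi^s\delta_i^1$, then uses $\ker[\pi^s]=\Lambda_{\pi,s}$ together with $N_F\xi=\zeta$ to rewrite $\sum^F[\delta_i^1]\xi^{\sigma^i}=[\alpha^1]\zeta=\sum^F[\alpha^1]\xi^{\sigma^i}$, and iterates this procedure; each $\delta_i$ is then exhibited as the same convergent sum $\sum_k\pi^{ks}\alpha^k$, which is independent of $i$ and lies in $(\pi^s)$. You instead read Theorem \ref{osnov} as a strict presentation $F(\mathfrak{m})\cong R^{n+1}/Rr$ with $R=\mathcal{O}_K[G]$ and $r=(\sigma-1)e_\omega-\pi^s e_\xi$, split off the free summand on the $\theta_i$, and reduce everything to the group-ring computation $\operatorname{Ann}_R(\sigma-1)=\mathcal{O}_K N$, which you verify correctly by lifting to $\mathcal{O}_K[x]$ and cancelling the non-zero-divisor $x-1$ (the UFD remark is not even needed; cancellation in a domain suffices). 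What your approach buys: no limits, no appeal to completeness of $\mathcal{O}_K$, and it makes transparent that the lemma is precisely the statement that the only relations among the $\xi^{\sigma^i}$ are the $\mathcal{O}_K$-multiples of $Nr=-\pi^s Ne_\xi$. What it costs: the argument rests entirely on reading ``system of generators with a single relation'' as exactness of the presentation, i.e.\ that the kernel of the surjection from the free module is generated as an $R$-module by $r$; this is the intended meaning in the Borevich--Iwasawa tradition and in \cite{Polyakov}, so the reading is legitimate, but the paper's own proof deliberately uses only the weaker divisibility statement of the cited Lemma 5 (plus completeness) rather than the full strength of the presentation. Both arguments are valid.
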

\begin{proof}
From \cite{Polyakov} lemma 5 we have $\delta_i=\pi^s\delta_i^1$, then $[\pi^s]\sum^F[\delta_i^1]\xi^{\sigma^i}=0$. Therefore for some $\alpha^1$ we have the following $$\sum^F[\delta_i^1]\xi^{\sigma^i}=[\alpha^1]\zeta=\sum^F[\alpha^1]\xi^{\sigma^i}$$
If we move everything to the left hand side then again by lemma 5 we get $\delta_i^1-\alpha^1=\pi^s\delta_i^2$.
After that, repeating this procedure we will build new $\delta_i^k,\ \alpha^k$, and therefore if we let $k\rightarrow \infty$, then for any $i$ we get $\delta_i^k\rightarrow\delta$ for some $\delta\in\mathfrak{m}^s$. 
\end{proof}
Now we will use this lemma in the following manner. Put $\delta_i=\pi^s c_i+b_{i-1}-b_{i}-\gamma$. Then $$\pi^s c_i+b_{i-1}-b_{i}-\gamma=\pi^s\chi$$
If we sum up all these equalities by $i$, we get 
$$\pi^s\sum c_i -p^m\gamma=p^m\pi^s\chi$$
$\gamma$ is determined modulo $\pi^s$, therefore we get
$$\gamma\equiv\frac{\pi^s}{p^m}\sum c_i\mod{\pi^s}$$
\subsection*{The main equation}
Let us introduce the following notations: $\epsilon_i=N_F \theta_i\ (i=1,...,n-1),\ \Omega=N_F\omega$. 
Applying the formal norm operator $N_F$ to our last equality (\ref{equ1}), we get
$$[p^m]x=N_F x=N_F[\pi^s]y=\sum^F[\pi^s p^m d_i]\epsilon_i+_F[p^m\gamma]\Omega$$
Taking formal Lubin-Tate logarithm and dividing by $p^m$, we get our main equation
\begin{equation}\label{osn}
    \lambda x=\sum \pi^s d_i\lambda\epsilon_i+\gamma\lambda\Omega
\end{equation}
From this equality we need to get $\gamma$, assuming $x,\ \epsilon_i,\ \Omega$ are known, and $d_i$ assuming unknowns. Next, we will show how to get rid of unknowns in the simplest particular case and obtain the Artin-Hasse formula.

\subsection*{Getting rid of the unknowns}

Henceforth we will work with the simplest case: $K=\mathbb{Q}_p,\ s=1$, $L=K_1=K(\zeta)$, where $\zeta=\eta-1,\ \eta^p=1$. Again assume that $M/L$ is unramified extension of degree $p^m$ which does not contain roots of unity of the next degree $p$ with $G=\left<\sigma\right>$. Let $\pi_L=\zeta$ be a prime element of the field $L$. Note that in our case the Lubin-Tate formal group law $F$ coincides with the classical multiplicative formal group law.

We constructed the system of generators and relations of formal module $F(\mathfrak{m}_M)$ in Theorem \ref{osnov} as follows (see Theorem 4 in \cite{Polyakov}). First, we considered the system of generators $F(\mathfrak{m}_L)$, For this purpose, we fixed $\zeta$ as one of the generators and complemented it to a complete system of generators.
More specifically, we complemented $\zeta$ by exponents from the generators of $\mathcal{O}_L$ over $\mathcal{O}_K$ multiplied by $\Pi$ (In this case, all exponents converge, since the extension $L/K$ is totally ramified and the generators of $\mathcal{O}_L$ over $\mathcal{O}_K$ will simply be $1, \Pi,\Pi^2, \ ...,\Pi^{p-2}$).
Then the system of generators of $F(\mathfrak{m}_L)$ as $\mathcal{O}_K$-module will be as follows:
$$\Pi,\ Exp(\Pi^2),\ Exp(\Pi^3),\ ...,Exp(\Pi^{p-1})$$
where $N_F(\xi)=\zeta$.
Next, we took the preimages of these elements with respect to the action of $N_F$, after that, by triviality of $H^1(G,F(\mathfrak{m}_M))$ we got $\omega$ (which satisfies the relation $\omega^\sigma-_F\omega=[\pi^s]\xi$) and added this element to the rest of the generators. As a result, we received a complete system of generators of $F(\mathfrak{m}_M)$ as $\mathcal{O}_K[G]$-module  
$$\xi,\omega,\theta_1,\ ...,\theta_{p-2}$$
Thus, we get the following $$\lambda\epsilon_i=\lambda(N_F(\theta_i))=\lambda(N_FN_F^{-1}Exp{\Pi^{i+1}}))=\Pi^{i+1}$$
Hereinafter, we will denote $\Tr=\Tr_{L/K}$.
\begin{lemma}
Let $f$ be the minimal polynomial of $\Pi$ over $\mathbb{Q}_p$, then $\Tr(\frac{\Pi^i}{f'(\Pi)})=0$ for $i=0,\ ...,p-3$.
\end{lemma}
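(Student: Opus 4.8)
The plan is to recognize this identity as the classical Euler--Lagrange formula for the trace-dual basis, and to exploit the fact that the stated range $0\le i\le p-3$ is exactly $0\le i\le \deg f-2$. Indeed, since $L=K(\zeta)=\mathbb{Q}_p(\eta)$ is the $p$-th cyclotomic field, $[L:\mathbb{Q}_p]=p-1$, and because $\Pi$ is a uniformizer of the totally ramified extension $L/\mathbb{Q}_p$ it is a primitive element, so $K(\Pi)=L$ and $f$ is monic of degree $n:=p-1$. In particular $f$ is separable, hence $f'(\Pi)\neq 0$ and the quantity $\Pi^i/f'(\Pi)$ is well defined.

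First I would rewrite the trace as a sum over the conjugates of $\Pi$. Since $L/K$ is Galois (it is cyclotomic, hence abelian) with group of order $n$, the elements $\tau(\Pi)$ for $\tau\in\Gal(L/K)$ run over the $n$ distinct roots $\Pi_1,\dots,\Pi_n$ of $f$, and for any polynomial $g$ with coefficients in $K$ one has $\Tr(g(\Pi))=\sum_{k=1}^{n}g(\Pi_k)$ because $\tau(f'(\Pi))=f'(\tau\Pi)$. Applying this with $g(X)=X^i/f'(X)$ reduces the claim to the purely algebraic identity $\sum_{k=1}^n \Pi_k^i/f'(\Pi_k)=0$ for $0\le i\le n-2$.

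To prove that identity I would use the partial-fraction decomposition $1/f(X)=\sum_{k=1}^n \bigl(f'(\Pi_k)(X-\Pi_k)\bigr)^{-1}$, valid since $f$ is monic with simple roots. Expanding each summand at infinity via $(X-\Pi_k)^{-1}=\sum_{i\ge 0}\Pi_k^i X^{-i-1}$ and collecting terms shows that the coefficient of $X^{-i-1}$ in $1/f(X)$ equals $\sum_{k=1}^n \Pi_k^i/f'(\Pi_k)$. On the other hand, since $f$ is monic of degree $n$, the Laurent expansion of $1/f(X)$ in $L(\!(1/X)\!)$ begins at $X^{-n}$, so all coefficients of $X^{-1},\dots,X^{-(n-1)}$ vanish; these correspond precisely to $i=0,\dots,n-2=p-3$, yielding the asserted equalities.

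The computation is essentially routine; the only points requiring care are (i) confirming that $\Pi$ genuinely generates $L$ over $K$, so that $\deg f=p-1$ and the index range $0\le i\le p-3$ is the correct vanishing range, and (ii) justifying the expansion at infinity, which is unconditional since all manipulations take place in $L(\!(1/X)\!)$. It is worth noting that the next value $i=n-1=p-2$ would give $\Tr(\Pi^{p-2}/f'(\Pi))=1$ rather than $0$, which is exactly why the lemma stops at $p-3$ and is the nonvanishing term that will matter in the subsequent normalization.
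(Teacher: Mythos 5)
Your proof is correct and complete: the reduction of the trace to a sum over the roots of $f$, the partial-fraction expansion of $1/f(X)$ at infinity, and the observation that the Laurent series begins at $X^{-n}$ together give exactly the stated vanishing for $i=0,\dots,p-3$. The paper does not write out a proof but simply cites Fesenko--Vostokov (p.~70), where these classical Euler formulas are established by essentially this same argument, so your proposal matches the intended proof while having the merit of being self-contained.
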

A proof of a similar lemma can be found in \cite{VostFes} p.70.
\\ \\
Thus, in our case, equality \ref{osn} will be as follows
$$\lambda x=p\sum_{i=1,\ ...,p-2} d_i\Pi^{i+1}+\gamma\lambda\Omega$$
Dividing this equality by $\Pi^2f'(\Pi)$ we get
$$\frac{1}{\Pi^2f'(\Pi)}\lambda x=p\sum_{i=1,\ ...,p-2} d_i\frac{\Pi^{i-1}}{f'(\Pi)}+\gamma\frac{\lambda\Omega}{\Pi^2f'(\Pi)}$$
Taking the trace operator and using Lemma 2, we get rid of $d_i$
$$\Tr(\frac{1}{\Pi^2f'(\Pi)}\lambda x)=\gamma \Tr(\frac{\lambda\Omega}{\Pi^2f'(\Pi)})$$
Because $f(T)=\frac{(1+T)^p-1}{T}$, then $\Pi^2f'(\Pi)=p\frac{\Pi}{\eta}$, and therefore
\begin{equation}\label{sec}
    \Tr(\Pi^{-1}\eta\lambda x)=\gamma \Tr(\Pi^{-1}\eta\lambda\Omega)
\end{equation}

Now, to get the final formula, we need to calculate the multiplier at $\gamma$. But since $\gamma$ is defined modulo $p$, it will be enough for us to calculate it modulo $p\Tr(\Pi^{-1}\eta\lambda\Omega)$.
\subsection*{Calculating $\lambda\Omega$}
Recall that we get $\omega$ as the element that gives us a coboundary:
$$\omega^\sigma-_F\omega=[p]\xi$$
From the proof of analogue of Hilbert's Theorem 90 for unramified extensions (\cite{Polyakov} Theorem 3) it is clear that
$$\Tr_{M/L}\lambda\omega=-\pi^s \Tr_{M/L}(\chi^\sigma\lambda\xi+\chi^{\sigma^2}(\lambda\xi+\lambda\xi^{\sigma})+\chi^{\sigma^3}(\lambda\xi+\lambda\xi^{\sigma}+\lambda\xi^{\sigma^2})+...+\chi^{\sigma^{p^m-1}}(\lambda\xi+\lambda\xi^{\sigma}+...+\lambda\xi^{\sigma^{p^m-2}}))$$
Where $\chi$ any element, such that $\Tr_{M/L}(\chi)=1$ (by surjectivity of the trace operator in the case of unramified extension), in our case $\pi^s$ equals $p$.

The element $\xi$ is defined by equality $N_F\xi=\Pi$ as before. Consider it modulo $\Pi^2$, then we get $\Tr_{M/L}\xi=\Pi \mod{\Pi^2}$. Therefore we get (for more details \cite{Ilya} Lemma 6)
$$\xi=\Pi\chi \mod{\Pi^2}$$

In our case, the formal logarithm has the following simplest form: $\lambda(X)=X+\frac{X^2}{2}+\frac{x^3}{3}+...$, therefore $$\lambda\xi=\Pi(\chi-\chi^p) \mod{\Pi^2}$$

Consider first the case $m=1$ and then reduce the general case to this.\\ \\
Since all our calculations are performed modulo $\Pi$, we can assume that we are working in the residue fields: $l=\mathbb{F}_p,\ m=\mathbb{F}_{p^p}$. For degree $p$ extensions of finite fields, it is known that there exists a self-adjoint normal basis $\tau, \tau^p,\ \tau^{p^2},\ ...,\tau^{p^{p-1}}$ of the field $\mathbb{F}_{p^p}$ over $\mathbb{F}_p$ for which the trace of each element of this basis is equal to 1.
As elements of this basis, we can take the roots of the following irreducible polynomial over $\mathbb{F}_{p^p}$ (\cite{Shuhong} Theroem 5.4.2):
$$x^p-x^{p-1}-1$$
Then as $\chi$ we can take an element $\tau$ of this basis (since by construction we can take any element with trace equals 1). Then, due to the self-adjointness of the basis, the following relations hold: 
$$\Tr(\chi^{2p^k})=1,\ \Tr(\chi^{p^k+p^j})=0,\ \forall k,j,\ j\neq k$$
Finally, we can compute $\Tr_{M/L}\lambda\omega \mod{p\Pi^2}$, taking as $\chi$ any its lifting from the residue field
$$\frac{1}{-p\Pi}\Tr_{M/L}\lambda\omega=\frac{1}{\Pi}\Tr(\chi^\sigma\lambda\xi+\chi^{\sigma^2}(\lambda\xi+\lambda\xi^{\sigma})+...+\chi^{\sigma^{p-1}}(\lambda\xi+\lambda\xi^{\sigma}+...+\lambda\xi^{\sigma^{p-2}}))=$$
$$=\Tr(\chi^\sigma(\chi-\chi^\sigma)+\chi^{\sigma^2}(\chi-\chi^\sigma+\chi^\sigma-\chi^{\sigma^2})+...+\chi^{\sigma^{p-1}}(\chi-\chi^{\sigma^{p-1}}))$$
Due to the self-adjointness of the basis inside the trace in each bracket, only the last term will give us a nonzero contribution, so we have the following:
$$\frac{1}{-p\Pi}\Tr_{M/L}\lambda\omega=\Tr(-\chi^{2p}-\chi^{2p^2}-...-\chi^{2p^{p-1}})=-p+1=1\mod{\Pi}$$
So we have $\lambda\Omega=\Tr_{M/L}\lambda\omega=-p\Pi+p\Pi^2\delta$, for some $\delta$, and then
$$\Tr_{L/K}\Pi^{-1}\eta\lambda\Omega=\Tr_{L/K}(-\Pi^{-1}\eta p\Pi+\Pi^{-1}\eta p\Pi^2\delta)=-p\Tr_{L/K}(\eta)+p^2\delta'=p+p^2\delta'$$ \\
Consider the case $m>1$. Consider an unramified extension of degree $p$: $H/L$. Then, by the transitivity of the trace: $\Tr_{M/L}=\Tr_{H/L}\circ \Tr_{M/H}$, and surjectivity for unramified extensions, as $\chi\in F(\mathfrak{M}_M)$ we can take an element that under action of trace operator, will go to $\chi$ from the previous case from the field $H$. After that, all subsequent calculations will be similar to the calculations from the previous case. In $\Tr_{H/L}(...)$ all terms in brackets except the last ones will give 0, and the latter will give -1 and there will be $p^m-1$ of them. Therefore, we will get the same result.

\subsection*{Artin-Hasse formula}
It remains to substitute into equality \ref{sec} what we got in the previous calculations and express the $\gamma$. Thus, we have obtained the Artin-Hasse formula in the case when division by isogeny gives an unramified $p-$extension withnout next $p-$roots of unity.
\begin{theorem}
Let $K=\mathbb{Q}_p$ and $F$ be a multiplicative formal group law on $K$, also let $\Pi=\zeta=\eta-1$ be a root of isogeny $[p]$ and $L=K(\zeta)$. Consider $x\in F(\mathfrak{M}_L)$ and let $y$ such that $[p](y)=x$ lies in the unramified $p-$extension $M/L$ which does not contain roots of unity of the next degree $p$, $(\Pi,x)_1=[\gamma]\zeta$. Then $$\gamma=\frac{1}{p}\Tr_{L/K}(\Pi^{-1}\eta\lambda x)$$
\end{theorem}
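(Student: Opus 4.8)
The plan is to obtain the formula directly from equation (\ref{sec}) by substituting the value of $\lambda\Omega$ computed in the preceding subsection and then solving for $\gamma$ modulo $p$. Since (\ref{sec}) already isolates $\gamma$ up to the single scalar $\Tr(\Pi^{-1}\eta\lambda\Omega)$, the whole remaining task reduces to inverting that scalar.

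First I would insert the outcome of the $\lambda\Omega$ computation, namely $\Tr_{L/K}(\Pi^{-1}\eta\lambda\Omega)=p+p^2\delta'=p(1+p\delta')$, into (\ref{sec}), obtaining
$$\Tr(\Pi^{-1}\eta\lambda x)=\gamma\,p\,(1+p\delta').$$
The key structural observation is that $1+p\delta'$ is a unit in $\mathbb{Z}_p$, so the coefficient of $\gamma$ has valuation exactly one. In particular the left-hand side lies in $p\mathbb{Z}_p$ because $\gamma\in\mathbb{Z}_p$, which simultaneously guarantees that $\tfrac{1}{p}\Tr(\Pi^{-1}\eta\lambda x)$ is a genuine $p$-adic integer.

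Next I would divide through by $p$ and recall that $\gamma$ is only ever determined modulo $\pi^s=p$, as was stressed when the generalised Hilbert pairing was set up and again after Lemma 1. Writing the relation as $\gamma(1+p\delta')=\tfrac{1}{p}\Tr(\Pi^{-1}\eta\lambda x)$ and using $(1+p\delta')^{-1}\equiv1\pmod{p}$, the correction factor collapses upon reduction, leaving $\gamma\equiv\tfrac{1}{p}\Tr_{L/K}(\Pi^{-1}\eta\lambda x)\pmod{p}$, which is precisely the asserted Artin--Hasse formula.

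The substance of the argument has all been spent earlier: deriving the main equation (\ref{osn}) from the Borevich--Iwasawa system of generators, eliminating the unknown coefficients $d_i$ via Lemma 2 to reach (\ref{sec}), and, most of all, evaluating $\lambda\Omega$ through the self-adjoint normal basis of $\mathbb{F}_{p^p}/\mathbb{F}_p$. Thus I do not expect a serious obstacle at this concluding step; the only point demanding care is the $p$-adic valuation bookkeeping, namely checking that the unit $1+p\delta'$ genuinely disappears modulo $p$ and that the division by $p$ is legitimate, i.e.\ that $\gamma$ descends to a well-defined class in $\mathbb{Z}_p/p\mathbb{Z}_p$ and is not merely a formal quotient.
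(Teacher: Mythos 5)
Your proposal is correct and takes essentially the same route as the paper: the paper's own proof of this theorem consists precisely of substituting the previously computed value $\Tr_{L/K}(\Pi^{-1}\eta\lambda\Omega)=p+p^2\delta'$ into equality (\ref{sec}) and solving for $\gamma$ modulo $p$. Your explicit bookkeeping, that $1+p\delta'$ is a unit with $(1+p\delta')^{-1}\equiv 1 \pmod{p}$ so the correction factor disappears at the level of residues, is exactly what the paper leaves implicit in the phrase ``it remains to substitute into equality (\ref{sec}) \ldots and express the $\gamma$.''
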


\end{document}